\documentclass[letterpaper, 10 pt, conference]{ieeeconf}  

\IEEEoverridecommandlockouts    

\usepackage{graphics} 
\usepackage{amsmath} 
\usepackage{amssymb}  
\usepackage{subfigure}
\usepackage{stfloats}
\usepackage{graphicx}
\usepackage{algorithm,algorithmic}

\newcommand{\X}{\mathbb{X}}
\newcommand{\R}{\mathbb{R}}
\newcommand{\N}{\mathbb{N}}
\newcommand{\C}{\mathbb{C}}

\newcommand{\K}{\mathcal{K}}
\newcommand{\HS}{\mathcal{H}}

\usepackage[dvipsnames]{xcolor}
\usepackage[space]{cite}

\newtheorem{defn}{Definition}
\newtheorem{prop}{Proposition}

\newtheorem{assmpt}{Assumption}
\newtheorem{ex}{Example}
\newtheorem{remark}{Remark}%

\title{\LARGE \bf RKHS method for computing Koopman-based Lyapunov functions
}

\author{François-Grégoire Bierwart$^{1}$ and Alexandre Mauroy$^{2}$
\thanks{$^{1,2}$ Department of Mathematics and Namur Research Institute for Complex Systems (naXys), University of Namur, 5000, Belgium
        {\tt\small francois-gregoire.bierwart.@unamur.be, alexandre.mauroy@unamur.be}}%
}

\begin{document}

\maketitle
\thispagestyle{empty}
\pagestyle{empty}


\begin{abstract}
The Koopman operator is a powerful approach to global stability analysis of nonlinear systems, which provides a systematic procedure for Lyapunov function design. In this framework, Lyapunov functions are obtained through the eigenfunctions of the Koopman operator associated with the eigenvalues of the Jacobian matrix at the equilibrium. In practice, the eigenfunctions are approximated via a finite-dimensional representation of the operator, and there is no guarantee that the approximated spectrum accurately matches the true one. In this paper, we develop a kernel-based method to compute Koopman eigenfunctions and preserve the spectrum of the Jacobian matrix. This approach is suitable for stability analysis of high-dimensional systems thanks to the kernel trick. Moreover, the Lyapunov function candidate is validated through a scenario-based optimization technique that provides a reliable estimation of the region of attraction of the system.  
\end{abstract}

\section{Introduction}

Global stability analysis of dynamical systems plays an important role in many areas of sciences. For instance, estimating the region of attraction is vital for safety-critical systems, such as flight control or medical applications. It is well known that the global stability property of a system is fully described by a Lyapunov function \cite{KhalilNonlinearControl}. For nonlinear systems, and in particular nonlinear ones, the design of a Lyapunov function is challenging and has attracted great attention over the last decades. For instance, several Lyapunov function design methods have been developed, such as sum-of-squares (SOS) methods \cite{papachristodoulou2005analysis}, solution to the Zubov's equation \cite{vannelli1985maximal,liu2025physics} and collocation kernel-based methods \cite{giesl2007meshless,giesl2016approximation}, to list a few. We refer to \cite{giesl2015review} for a general overview of existing methods for Lyapunov function design.\\
The Koopman operator plays an important role in global stability analysis, as it allows to design systematic Lyapunov functions through Koopman eigenfunctions associated with the eigenvalues of the Jacobian matrix at the equilibrium \cite{mauroy2014global}. Although this framework is appealing, it relies on numerical approximation of the eigenfunctions \cite[Chapter 1]{mauroy2020koopman}. In fact, the infinite-dimensional Koopman operator is approximated as a finite-dimensional operator by restricting the space of observables to a finite-dimensional subspace. Several approaches have been considered in that context, including polynomial expansion  \cite{mauroy2014global,yadav2025approximation} or kernel-based methods \cite{das2020koopman,klus2020kernel,lee2025kernel,williams2014kernel}, to list a few. These latter methods have received increasing attention in recent years and scale well for high-dimensional systems due to the kernel trick, in contrast to polynomial subspaces which suffer from the curse of dimensionality. As a related issue, there is usually no guarantee that the relevant subset of the point spectrum required to compute the Lyapunov function is preserved by the approximation. However, this \emph{spectral pollution} problem can be overcome by imposing the desired eigenvalues when solving the eigenvalue equation, see e.g. \cite{mauroy2014global} or more recently \cite{lee2025kernel}.

In this paper, we design a new Koopman-based method to estimate Koopman eigenfunctions and construct Lyapunov functions, which relies on Reproducing Kernel Hilbert Space (RKHS) theory. Thanks to a specific orthogonal projection inspired by \cite{lee2025kernel}, in a well-chosen RKHS, this method not only allows to preserve the spectrum of the Jacobian matrix at the equilibrium, but is also well-suited to high-dimensional systems. Moreover, the Lyapunov function candidate derived from the method is further validated through a scenario-based method \cite{campi2015non}. Numerical tests demonstrate that the method is efficient for estimating the region of attraction (ROA) of low and high-dimensional systems.\\
The paper is organized as follow. In Section \ref{sec:2}, we review the fundamentals of global stability analysis through the Koopman operator framework. The main results of the paper are given in Section \ref{sec:method}, where we develop a kernel-based method to approximate the eigenfunctions of the Koopman operator while preserving the spectrum of the Jacobian matrix. The Lyapunov candidate resulting from the approximation is further validate in Section \ref{sec:validation} through scenario-based techniques to estimate the region of attraction of an equilibrium. Section \ref{sec:examples} provides numerical experiments for different systems and conclusions and perspectives are given in Section \ref{sec:ccl}

\section{Koopman operator framework for global stability analysis}\label{sec:2}

\subsection{Koopman-based Lyapunov functions}

\noindent In this section, we review the basics of global stability analysis through the Koopman operator. Consider the system 
\begin{equation}\label{sysdyn:def}
\dot{x} = F(x), \quad x\in\X
\end{equation}
where $\X\subset\mathbb{R}^n$ is a compact set containing the origin and $F$ is Lipchitz and analytic in $\mathbb{X}$. We will refer to $\varphi^t : \mathbb{R}^+\times \X \rightarrow \mathbb{R}^n,~ x \mapsto \varphi^t(x) = x(t)$ as the flow map of \eqref{sysdyn:def} and we assume that the system admits a locally stable hyperbolic equilibrium point at the origin whose region of attraction is denoted by $\mathcal{R}$, i.e,  
$$
\mathcal{R} ~=~ \left\{x\in\X \mid \lim_{t \rightarrow \infty}\varphi^t(x) = 0\right\}.
$$
It is well known that $\mathcal{R}$ is fully characterized by a Lyapunov function for the system \cite{KhalilNonlinearControl}. In this paper, we will focus on Koopman-based Lyapunov functions as outlined in \cite{mauroy2014global}. We first recall the definition of the semigroup of Koopman operators.
\begin{defn}\label{def:koop}
Let $\mathcal{F}= C^1(\mathcal{R})$. We define the Koopman semigroup as the family $\{\K_t\}_{t\geq 0}$, $\K_t : \mathcal{F} \rightarrow \mathcal{F}$ such that
$
\K_t f =f \circ \varphi^{t}$ for all $f \in \mathcal{F}.
$
\end{defn}
For any differentiable $f$ and for all $x\in\mathcal{R}$, we can also define
$$
(\mathcal{L}f)(x) ~ \triangleq ~ \lim_{t \rightarrow 0^{+}}\dfrac{\mathcal{K}_tf(x) - f(x)}{t} ~=~ F(x)\cdot\nabla f(x).
$$
If the Koopman semigroup $\left(\K_t\right)_t$ is strongly continuous, the operator $\mathcal{L}$ is its infinitesimal generator\footnote{For the sake of simplicity, we will refer to $\mathcal{L}$ as the Koopman generator, even if $\{\K_t\}_t$ is not strongly continuous.}. Since the vector field is supposed to be known in our context, we will focus on the generator rather than on the semigroup.

The (linear) Koopman generator $\mathcal{L}$ and semigroup $\K_t$ share the same eigenfunctions $\phi_{\lambda}\in\mathcal{F}$ associated with the eigenvalues $\lambda$ and $e^{\lambda t}$, respectively, i.e. $\mathcal{L}\phi_{\lambda} = \lambda \phi_{\lambda}$ and $\K_t\phi_\lambda = e^{\lambda t}\phi_\lambda$. Since the origin is a hyperbolic equilibrium and $\mathcal{F}= C^1(\mathcal{R})$, the point spectrum of $\mathcal{L}$ contains the spectrum $\sigma(\mathbf{J}_F(0))$ of the Jacobian matrix $\mathbf{J}_F(0)$ at the origin (see e.g. \cite{mauroy2014global}). Moreover, the corresponding eigenfunctions are unique (see \cite{kvalheim2021existence}) and allow us to construct a generic Lyapunov function of the form
\begin{equation}\label{TrueLyap}
V(x) ~=~ \sum_{i=1}^n\left|\phi_{\lambda_i}(x)\right|^2,~~~\forall x\in \mathcal{R},
\end{equation}
with $\lambda_i\in\sigma(\mathbf{J}_F(0))$ and $\Re(\lambda_i)<0$ for all $i=1,\ldots,n$. 

\subsection{Numerical approximation of the Koopman generator}

\noindent The Koopman-based Lyapunov function \eqref{TrueLyap} relies on the eigenfunctions of $\mathcal{L}$, which are typically unknown. This issue can be addressed by computing an approximation of $\mathcal{L}$. Let $\mathcal{F}_\ell = \mathrm{span}\{\psi_i\}_{i=1}^\ell\subset \mathcal{F}$ be an $\ell$-dimensional subspace, for some $\ell\in\N_0$, where $\{\psi_i\}_{i=1}^\ell$ is a set of basis functions. For a given projection operator $\Pi : \mathcal{F} \rightarrow \mathcal{F}_\ell$, an approximation of $\mathcal{L}$ is given by $\mathcal{L}_\ell = \Pi\mathcal{L}\hspace{-0.1cm}\mid _{\mathcal{F}_\ell} : \mathcal{F}_\ell \rightarrow \mathcal{F}_\ell$. Since $\mathcal{L}_\ell$ is finite-dimensional, it admits the matrix representation $\bold{L}_\ell$ whose $j^{\text{th}}$ column contains the components of $\Pi(\mathcal{L}\psi_j)$ in the basis $\{\psi_i\}_{i=1}^\ell$ \cite[Chapter 1]{mauroy2020koopman}. Let $\{\widetilde{\lambda_i}\}_{i=1}^\ell$ be the eigenvalues of $\mathcal{L}_\ell$ and let $\{\widetilde{\phi}_{\,\widetilde{\lambda}_i}\}_{i=1}^\ell$ be the associated eigenfunctions. It can be shown that, for any $i=1,\ldots,\ell$, $\widetilde{\phi}_{\widetilde{\lambda_i}}(x) = \Psi(x) \, v_i$ where $\Psi(x) = [\psi_1(x),\ldots,\psi_\ell(x)]$ and $v_i$ is the right eigenvector of $\bold{L}_\ell$ associated with the eigenvalue $\widetilde{\lambda}_i$. Then, provided that there exist eigenvalues $\widetilde{\lambda}_i \approx \lambda_i$, the eigenfunctions $\widetilde{\phi}_{\,\widetilde{\lambda}_i}$ can be used to approximate the exact eigenfunctions, yielding in turn an approximation of the Lyapunov function \eqref{TrueLyap}. However, there is no guarantee that the approximated eigenvalues (and thus the eigenfunctions) approximate well those of $\mathcal{L}$, except in very few cases. For instance, in our previous work \cite{bierwarterror}, the analyticity of the eigenfunctions was leveraged to ensure that $\sigma(\mathbf{J}_F(0))\subset\sigma(\mathcal{L})$, and approximation error bounds were obtained. Yet, this framework is limited since it involves the use of monomials, which is only suitable to low-dimensional systems. Moreover, error bounds are only valid in the domain of analyticity which could be conservative with respect to $\mathcal{R}$.  

\section{RKHS-based spectral approximation of the Koopman generator}\label{sec:method}

In this section, we construct an approximation of the Koopman generator which is well-suited to high-dimensional systems while ensuring a good approximation of the eigenvalues of $\mathcal{L}$. To this end, we will rely on the theory of Reproducing Kernel Hilbert Spaces (RKHS).

\subsection{A brief introduction to RKHS}

\noindent We first provide a brief overview of Reproducing Kernel Hilbert Space (RKHS) theory. We refer to \cite{paulsen2016introduction} for a more thorough analysis on RKHS.
\begin{defn}
A Reproducing Kernel Hilbert Space $\mathcal{H}$ on $\X$ is a Hilbert space of functions $f : \X \rightarrow \C$ where the evaluation functional $E_x : \mathcal{H} \rightarrow \C, f \mapsto E_x(f) = f(x)$ is bounded for any $x\in\X$.
\end{defn}
\noindent It follows from the Riesz representation theorem that, for any \mbox{$x\in\X$}, there exists $k_x\in \mathcal{H}$ such that \mbox{$\langle f,k_x\rangle_{\HS} = f(x)$} for all $f\in \mathcal{H}$. The function $k : \X\times\X \rightarrow \C$, \mbox{$(x,y) \mapsto k(x,y) := k_x(y)$} is the unique \textit{reproducing kernel} of $\mathcal{H}$. That is, a RKHS is uniquely defined through its kernel function. As an example, the polynomial kernel 
$$k(x,y) ~=~ (x^\top y + c)^d ~=~ \sum_{s = 0}^d{d\,\choose s} \,c^{d-s}\,(x^\top y)^s$$
with $c\geq 0$ and $d\in\mathbb{N}$ is associated with the RKHS of polynomials up to order $d$. The following result extends the above example to a whole class of analytic kernels (called Taylor-based kernels).
\begin{prop}[\hspace{-0.0001cm}\cite{shawe2004kernel}]\label{prop-kernel}
For any analytic function $f$ with nonnegative Taylor coefficients, $f(x^\top y)$ is a valid kernel.
\end{prop}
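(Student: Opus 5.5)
To show that $k(x,y) = f(x^\top y)$ is a valid kernel, I will prove that it is symmetric and positive semidefinite. That is, for any finite set of points $x_1,\ldots,x_m\in\X$ and coefficients $c_1,\ldots,c_m\in\C$, I will show that $\sum_{i,j}\overline{c_i}c_j\, f(x_i^\top x_j)\geq 0$. By the Moore--Aronszajn theorem (see \cite{paulsen2016introduction}), such a function is then the reproducing kernel of a unique RKHS. Throughout I assume, as is implicit in the statement, that the Taylor series $f(t)=\sum_{s\geq 0} a_s t^s$ with $a_s\geq 0$ converges on the set $\{x^\top y : x,y\in\X\}$. This holds for instance if its radius of convergence exceeds $\sup_{x\in\X}\|x\|^2$, which is finite because $\X$ is compact.

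\textbf{Step 1 (the linear kernel).} The kernel $(x,y)\mapsto x^\top y$ is positive semidefinite, since
$$\sum_{i,j}\overline{c_i}c_j\,x_i^\top x_j=\Big\|\sum_j c_j x_j\Big\|^2\geq 0 .$$

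\textbf{Step 2 (monomials).} For each $s\in\N$, the function $(x^\top y)^s$ is positive semidefinite. The cleanest route is to write
$$(x^\top y)^s=\langle x^{\otimes s},y^{\otimes s}\rangle_{(\R^n)^{\otimes s}},$$
which exhibits an explicit feature map, so the argument of Step 1 applies directly. Alternatively, one can invoke the Schur product theorem: the Gram matrix of $(x^\top y)^s$ is the $s$-fold Hadamard power of a positive semidefinite matrix, hence positive semidefinite. For $s=0$ the Gram matrix is the all-ones matrix $\mathbf{1}\mathbf{1}^\top$, which is positive semidefinite.

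\textbf{Step 3 (passing to the series).} Partial sums $\sum_{s=0}^N a_s (x^\top y)^s$ with $a_s\geq 0$ are nonnegative combinations of positive semidefinite kernels, so they are positive semidefinite. For fixed points $x_1,\ldots,x_m$, the quadratic form is a finite sum. Hence as $N\to\infty$ it converges to $\sum_{i,j}\overline{c_i}c_j f(x_i^\top x_j)$ by pointwise convergence of the series, and nonnegativity is preserved in the limit. Symmetry of the kernel is immediate, since $x^\top y=y^\top x$ and the Taylor coefficients are real.

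The only delicate point is Step 3, specifically ensuring convergence of the series on all relevant arguments. This is where the radius-of-convergence hypothesis enters. Compactness of $\X$ makes it checkable, because $|x^\top y|\leq \sup_{x\in\X}\|x\|^2$. Once convergence is granted, the limit argument is routine, since positive semidefiniteness is a closed condition.
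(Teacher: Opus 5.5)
Your proof is correct. It is the standard argument behind the cited result: closure of positive semidefinite kernels under products (via the tensor feature map or the Schur product), nonnegative combinations, and pointwise limits. The paper itself gives no proof of its own and only refers to Shawe-Taylor and Cristianini; your explicit requirement that the Taylor series converge on the range of $x^\top y$ (e.g.\ radius of convergence larger than $\sup_{x\in\X}\|x\|^2$) is the correct reading of the hypothesis.
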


\noindent We also recall a useful result on interpolation in RKHS.

\begin{prop}[\hspace{-0.0001cm}\cite{paulsen2016introduction}]\label{interp}
Let $\mathcal{H}$ be a RKHS on $\X$ with reproducing kernel $k$, and let $Q = \{x_1,\ldots,x_m\}\subset \X$ be a set of distinct points. If the matrix $[\mathbf{G}]_{ij}:=[k(x_i,x_j)]_{ij}$ is invertible, then for any $v = (v_1,\ldots,v_m)^\top\subset \C^m$ there exists a function $g \in \mathrm{span} \{k_{x_1},\dots,k_{x_m}\}$ interpolating these values, which is given by 
$
g = \sum_j \alpha_jk_{x_j},
$
where $\alpha := (\alpha_1,\ldots,\alpha_m)^\top$ is the solution of the linear system $\mathbf{G}\alpha = v$.  
\end{prop}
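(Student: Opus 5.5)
The plan is to verify directly that the proposed $g$ lies in the right span and has the prescribed values at the nodes, using only the reproducing property and the symmetry of the kernel. Existence of $\alpha$ comes first. Since $\mathbf{G}$ is assumed invertible, the linear system $\mathbf{G}\alpha = v$ has the unique solution $\alpha = \mathbf{G}^{-1}v \in \C^m$. We then define $g := \sum_{j=1}^m \alpha_j k_{x_j}$. Each $k_{x_j}$ belongs to $\mathcal{H}$ by the Riesz construction recalled above, so $g\in\mathrm{span}\{k_{x_1},\dots,k_{x_m}\}\subset\mathcal{H}$ by construction.

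Next we evaluate $g$ at each node $x_i$. By the reproducing property, $g(x_i) = \langle g, k_{x_i}\rangle_{\HS} = \sum_j \alpha_j \langle k_{x_j}, k_{x_i}\rangle_{\HS}$. Applying the reproducing property once more, $\langle k_{x_j}, k_{x_i}\rangle_{\HS} = k_{x_j}(x_i)$. With the paper's convention $k(x,y)=k_x(y)$, this equals $k(x_j,x_i)$.

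The only delicate point is matching indices with the definition $[\mathbf{G}]_{ij}=k(x_i,x_j)$. For this I would invoke Hermitian symmetry of the kernel: $k(x_j,x_i) = \langle k_{x_j},k_{x_i}\rangle_{\HS} = \overline{\langle k_{x_i},k_{x_j}\rangle_{\HS}} = \overline{k(x_i,x_j)}$. For the real-valued kernels used in the paper (polynomial and Taylor-based kernels, Proposition~\ref{prop-kernel}), this is plain symmetry, $k(x_j,x_i)=k(x_i,x_j)$. Hence $g(x_i) = \sum_j [\mathbf{G}]_{ij}\alpha_j = (\mathbf{G}\alpha)_i = v_i$ for every $i=1,\dots,m$, which is the interpolation claim.

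For a genuinely complex kernel, the same computation gives $g(x_i) = (\mathbf{G}^\top\alpha)_i$. The statement then holds either with the Gram matrix read in the transposed convention $k(x_j,x_i)$, or after noting that the paper's convention for which argument is the evaluation point is flexible. I would remark on this rather than belabor it. Finally, uniqueness within the span follows because the map $\alpha\mapsto (g(x_i))_i$ is represented by an invertible matrix.
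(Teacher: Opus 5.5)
Your proof is correct and is the standard argument: the paper gives no proof of its own and defers to Paulsen--Raghupathi, where the interpolant $g=\sum_j\alpha_jk_{x_j}$ is verified exactly as you do it, by evaluating at the nodes through the reproducing property. Your remark on indices is also accurate: with the paper's convention $k(x,y)=k_x(y)$ one gets $g(x_i)=(\mathbf{G}^\top\alpha)_i$, which equals $(\mathbf{G}\alpha)_i$ for the real symmetric kernels used throughout the paper, while the book avoids the issue by defining the kernel as $K(x,y)=k_y(x)$.
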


Let $Q := \{q_i\}_{i=1}^m \subset \X$ ($m\in\N_0)$ and $\mathcal{H}$ be a RKHS on $\X$ with the kernel $k$. Then, for any function $g$, we denote $\mathcal{I}_{Q,k}(g)$ as the interpolant in Proposition \ref{interp} with $v = \{g(q_i)\}_i$. According to Proposition \ref{interp}, a closed-form expression for the interpolation can be obtained provided that the Gram matrix $\mathbf{G}$ is invertible. The following result shows that some Taylor-based kernels satisfy this condition.
\begin{prop}[\hspace{-0.0001cm}\cite{pinkus2004strictly}]\label{propo:invert_ker}
Let $f(x) = \sum_{i=0}^\infty a_ix^i, x\in\mathbb{R}$ with $a_i\geq0$ for all $i$. Let $\X\subset\R^n$ and assume that $\X$ contains the set $\{\alpha x \mid |\alpha| \leq c\}$ for some $c>0$ and for some $x\in \X$ such that $\|x\| = 1$. Let $k(x,y) = f(x^\top y)$. Then, for any set of distinct collocation points $Q := \{q_i\}_{i=1}^m \subset \X$, the Gram matrix $[\mathbf{G}]_{ij} = [k(q_i,q_j)]_{ij}$ is strictly positive definite if and only if $a_0>0$ and there is an infinite number of even integers $i$ and an infinite number of odd integers $i$ such that $a_i>0$.
\end{prop}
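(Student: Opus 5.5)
Since $\mathbf{G}$ is a real symmetric positive semidefinite matrix (Proposition \ref{prop-kernel} makes $k$ a valid kernel), strict positive definiteness fails exactly when some nonzero $c\in\R^m$ satisfies
\[
c^\top\mathbf{G}c=\sum_{i=0}^\infty a_i\sum_{j,l}c_jc_l(q_j^\top q_l)^i=\sum_{i=0}^\infty a_i\Big\|\sum_j c_j q_j^{\otimes i}\Big\|^2=0 .
\]
The plan is to use this rewriting for both implications. Because every term is nonnegative, the vanishing of the sum is equivalent to $\sum_j c_j q_j^{\otimes i}=0$ for every $i$ with $a_i>0$. Equivalently, $\sum_j c_j\, p(q_j)=0$ for every homogeneous polynomial $p$ of degree $i\in S:=\{i: a_i>0\}$. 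So $\mathbf{G}$ is strictly positive definite on every set of distinct points if and only if the point evaluations at distinct points are linearly independent on the span of the homogeneous polynomials of degrees in $S$.

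\emph{Sufficiency.} Assume $a_0>0$ and that $S$ contains infinitely many even and infinitely many odd integers. Fix distinct $q_1,\dots,q_m$ and suppose $\sum_j c_j p(q_j)=0$ for all homogeneous $p$ of degree in $S$; the goal is $c=0$.
\begin{enumerate}
\item First choose a direction $u$ with all the projections $u^\top q_j$ distinct, and take $p=(u^\top x)^i$. This gives $\sum_j c_j t_j^i=0$ for $i\in S$, where the $t_j=u^\top q_j$ are distinct reals.
\item If at most one $t_j$ is $0$, group the remaining $t_j$ by absolute value; each value $\pm s$ occurs at most twice. Then use a generalized Vandermonde argument: letting $i\to\infty$ along even $i\in S$, the largest $|t_j|$ dominates and forces the corresponding $c_j+c_{j'}=0$; along odd $i$ it forces $c_j-c_{j'}=0$. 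Together these give $c_j=c_{j'}=0$. Peel off values by decreasing modulus and repeat.
\item Once only the point with $t_j=0$ remains, the condition $a_0>0$ (degree $0$, $p\equiv1$) forces the last coefficient to vanish.
\end{enumerate}
The main obstacle is making the dominance argument clean. I expect to handle it by dividing by $s_{\max}^i$ and passing to the limit along each parity subsequence separately.

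\emph{Necessity.} Here I use the hypothesis that $\X$ contains the segment $\{\alpha x_0 : |\alpha|\le c\}$ with $\|x_0\|=1$. Restricting to points $q_j=\alpha_jx_0$ on this segment turns the problem into the one-variable kernel $f(\alpha\beta)$.
\begin{itemize}
\item If $a_0=0$, take $q_1=0$: then $k(0,0)=0$, so $\mathbf{G}$ is singular.
\item If only finitely many even $i$ have $a_i>0$, say all are at most $2N$, take the $2N+2$ points $\pm\alpha_1x_0,\dots,\pm\alpha_{N+1}x_0$ with distinct $\alpha_j>0$. Look for $c$ with $c_{+j}=c_{-j}=b_j$. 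The odd-degree constraints then cancel automatically. The even-degree constraints are at most $N+1$ homogeneous linear equations in the $N+1$ unknowns $b_j$, namely $\sum_j b_j\alpha_j^{2r}=0$ for $r=0,\dots,N$. With one more point (for example also including $0$ or adding an extra pair) the system is underdetermined, so it has a nontrivial solution and $\mathbf{G}$ is singular.
\item The case of finitely many odd $i$ is symmetric: use $c_{+j}=-c_{-j}$, which kills the even-degree constraints, and leaves finitely many odd-degree equations with more unknowns than equations.
\end{itemize}
The point needing care is counting points so that the system is genuinely underdetermined. I would simply take $N+2$ pairs in each case, which leaves room to spare.
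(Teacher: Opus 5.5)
The paper gives no proof of this proposition; it is imported verbatim from \cite{pinkus2004strictly}. There is therefore no internal argument to compare with, and your proposal has to stand on its own. It does: it is a correct, self-contained proof along standard lines.

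The identity $c^\top\mathbf{G}c=\sum_i a_i\|\sum_j c_j q_j^{\otimes i}\|^2$ is legitimate, since it is a finite combination of the convergent series $f(q_j^\top q_l)$. Pairing with $u^{\otimes i}$, for a direction $u$ that separates the points, reduces everything to $\sum_j c_j t_j^i=0$ for $i\in S$. Because the $t_j$ are distinct, each modulus occurs at most twice. The even subsequence then yields $c_++c_-=0$ and the odd one yields $c_+-c_-=0$. The degree-$0$ equation, with $t^0=1$ also at $t=0$, kills the coefficient of a possible zero projection.

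In the necessity part, the symmetric and antisymmetric weightings on the points $\pm\alpha_j x_0$ decouple the two parities correctly. Two small fixes:
\begin{itemize}
\item Commit to $N+2$ pairs from the start. With $N+1$ pairs and all even degrees up to $2N$ present, your ansatz gives a square, nonsingular Vandermonde system in the $\alpha_j^2$, which has only the trivial solution.
\item Take the $\alpha_j$ distinct with $0<\alpha_j\le c$, so that the points actually lie in $\X$.
\end{itemize}

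Compared with the bare citation, your argument buys two clarifications. First, it fixes the quantifier structure. ``For any set $Q$'' belongs to the left-hand side of the equivalence; for a fixed $Q$ the statement is false. For example, a single nonzero point with $f(t)=t^2$ gives a positive $1\times 1$ Gram matrix even though $a_0=0$.

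Second, it shows that the hypothesis on $\X$ is used only for necessity: sufficiency holds for any finite set of distinct points of $\R^n$. The paper only uses the sufficiency direction, namely invertibility of $\mathbf{G}$ for $k_2$ in Example \ref{example1}, where $a_0=1$, $a_1=0$ and $a_k>0$ for $k\ge 2$. Its remark that $\mathcal{R}$ contains a ball around the origin is therefore not actually needed there.
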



\subsection{Spectral approximation of the Koopman generator}

In this section, we design a RKHS-based method to approximate the Koopman generator and its eigenfunctions while preserving $\sigma(\mathbf{J}_F(0))$. To do so, we will consider the RKHS $\HS_1 = \mathrm{span}\{x^\alpha\}_{|\alpha|=1}$ (associated to the RKHS with the kernel $k_1(x,y) = x^\top y$)\footnote{Unless otherwise stated, $x^\alpha$, $\alpha\in\mathbb{N}^n$, refers to a monomial $x^\alpha = x_1^{\alpha_1}\ldots\,x_n^{\alpha_n}$ with \mbox{$|\alpha| = \alpha_1+\cdots+\alpha_n$}.} and $\HS_2$ (associated with a kernel $k_2$) on $\mathcal{R}\subset\X$, such that $\HS_1\oplus\HS_2\subseteq\mathcal{F}$. In addition, we make the following assumption.

\begin{assmpt}\label{assmpt:1}
Let $Q := \{q_i\}_{i=1}^m \subset \mathbb{X}$ be a set of distinct collocation points. We assume that (i) $Q\subset \mathcal{R}$ and (ii) the Gram matrix $[\mathbf{G}]_{ij} = k_2(q_i,q_j)$ is invertible.\end{assmpt}
\begin{remark}\label{rem:samplingQ}
According to Assumption \ref{assmpt:1}, the set of collocation points $Q$ must lie in the (unknown) region of attraction. In practice, the sampled points are taken from the largest hypercube $[-a,a]^n$ providing the best results for the approximation of the ROA. These results can be further improved by selecting points in a larger set and discarding those that do not belong to the region of attraction according to numerical simulations. Such a trick does not fit with our original problem setting, but is illustrated in Section \ref{sec:examples} for informative purposes.
\end{remark}
Note that $\mathcal{R}$ contains a ball centered at the origin and therefore verifies the assumptions of Proposition \ref{propo:invert_ker}. Provided that $k_2$ is analytic on $\mathcal{R}$, we only need to verify the assumptions on the Taylor coefficients of $k_2$ to ensure that the Gram matrices are invertible. The following example provide a kernel for $k_2$ that verify such assumptions.
\begin{ex}\label{example1}
Let $\HS = \mathrm{span}\{x^\alpha\}_{|\alpha|\geq0}$ be the Fock space of analytic entire functions on $\mathcal{R}$ with the kernel 
$$k(x,y) = e^{\eta(x^\top y)} 
$$ 
for some $\eta > 0$. Using Taylor series expansion, we can split the kernel as  
$$
k(x,y) = \eta(x^\top y) + \hspace{-0.2cm}\sum_{k\in\mathbb{N}\setminus\{1\}}\hspace{-0.2cm}\dfrac{\eta^k(x^\top y)^k}{k!} := \eta k_1(x,y) + k_2(x,y).
$$
It is clear that $\HS_1\cap\HS_2 = \{0\}$. Moreover, $[\mathbf{G}]_{ij} = k_2(q_i,q_j)$ is invertible for any $Q=\{q_i\}_i\subset\mathcal{R}$ since $k_2$ verifies the assumptions of Proposition \ref{propo:invert_ker}.
\end{ex}

\noindent Let $\HS_2$ satisfy Assumption \ref{assmpt:1} and $S\subset \mathcal{F}$ be the subspace
\begin{equation}\label{eq:directsum}
S ~:=~ \mathcal{H}_1\oplus~\mathcal{H}_{2,m},\vspace{0.2cm}
\end{equation}
where $\mathcal{H}_{2,m} = \mathrm{span}\{{k_2(\cdot,q_i)}\}_{i=1}^m$ and $Q = \{q_i\}_{i=1}^m \subseteq \X$ is a finite set of distinct points satisfying Assumption \ref{assmpt:1}. We construct the approximation $\mathcal{L}_\ell ~:=~ \Pi\,\mathcal{L}\hspace{-0.12cm}\mid_{S} ~: S \rightarrow S$ where the projection $\Pi : \mathcal{F} \rightarrow S$ is inspired by \cite{lee2025kernel} and given \footnote{We can easily verify that $\Pi$ is a projection since $\Pi\circ\Pi = \Pi$.} as 
\begin{equation}\label{new_proj}
\Pi g ~=~ \nabla^\top  g\mid_0x~+~ \mathcal{I}_{Q,k_2}(g-\nabla^\top g\mid_0 x)
\end{equation}

\noindent for any differentiable $g$. Under some additional assumptions, we can show that the projection operator in \eqref{new_proj} is an orthogonal projection if restricted to $\HS_1\oplus\HS_2$. 

\begin{prop}\label{prop:proj_ortho}
Suppose that $\HS_1 \oplus \HS_2 \subseteq \mathcal{F}$  and $\HS_2$ satisfies Assumption \ref{assmpt:1}, with $k_2(x,y) = \sum_{k\in\mathbb{N}\setminus\{1\}}a_k(x^\top y)^k$, $a_k > 0$, for any $(x,y)\in \mathcal{R}\times\mathcal{R}$. Then, $\Pi|_{\HS_1 \oplus \HS_2}$ is an orthogonal projection in $\HS_1 \oplus \HS_2$.
\end{prop}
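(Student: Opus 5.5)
The plan is to equip $\HS_1 \oplus \HS_2$ with the inner product of the RKHS whose kernel is $k = k_1 + k_2$, and then show that $\Pi$ acts as the identity on $\HS_1$ and as the orthogonal projection onto $\HS_{2,m}$ on $\HS_2$. Since $\Pi\circ\Pi=\Pi$ is already known, it then suffices to check that $\Pi$ is self-adjoint, or equivalently that $\ker \Pi \perp \mathrm{range}\,\Pi$.

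First, I will establish the structure of the space.
\begin{itemize}
\item Because $k_2$ contains only monomial terms $(x^\top y)^k$ with $k\neq 1$, every element of $\HS_2$ is a power series with no degree-one terms.
\item $\HS_1$ consists exactly of the linear functions.
\item Hence $\HS_1\cap\HS_2=\{0\}$, so the sum is direct.
\item The RKHS with kernel $k_1+k_2$ is $\HS_1\oplus\HS_2$ with norm $\|f_1+f_2\|^2=\|f_1\|_{\HS_1}^2+\|f_2\|_{\HS_2}^2$. This is the standard sum-of-kernels theorem in \cite{paulsen2016introduction}, and with trivial intersection the decomposition is unique and orthogonal.
\item For $g=f_1+f_2$ with $f_1(x)=c^\top x$, analyticity gives $\nabla f_2|_0=0$, since $f_2$ has no linear terms. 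The constant and higher-order terms have zero gradient at $0$, which uses that the Taylor expansion of $f_2$ converges near $0\in\mathcal{R}$. Therefore $\nabla^\top g|_0\,x=f_1(x)$.
\end{itemize}

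Second, I will compute $\Pi$ on this space. The previous step gives
\[
\Pi g \;=\; f_1+\mathcal{I}_{Q,k_2}(f_2).
\]
Next I will show that $\mathcal{I}_{Q,k_2}$ restricted to $\HS_2$ is the $\HS_2$-orthogonal projection $P$ onto $\HS_{2,m}$. Invertibility of $\mathbf{G}$ (Assumption \ref{assmpt:1}) is what makes this work.
\begin{itemize}
\item For $h=f_2-\mathcal{I}_{Q,k_2}(f_2)$, we have $h(q_i)=0$ for all $i$, so $\langle h,k_2(\cdot,q_i)\rangle_{\HS_2}=h(q_i)=0$ by the reproducing property.
\item Thus $h\perp\HS_{2,m}$, while $\mathcal{I}_{Q,k_2}(f_2)\in\HS_{2,m}$.
\item By uniqueness of the orthogonal decomposition, $\mathcal{I}_{Q,k_2}(f_2)=Pf_2$.
\end{itemize}

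Finally, I conclude that $\Pi=\mathrm{Id}_{\HS_1}\oplus P$. This is an orthogonal projection onto $\HS_1\oplus\HS_{2,m}$, because its kernel $\{0\}\oplus(\HS_{2,m})^{\perp}$ is orthogonal to its range with respect to the sum inner product.

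The main obstacle is the first step. One must justify that $\HS_1\perp\HS_2$ in the ambient space, that is, which inner product on $\HS_1\oplus\HS_2$ is meant, and that the decomposition through $\nabla^\top g|_0 x$ agrees with the RKHS decomposition. I would handle this by explicitly choosing the kernel $k_1+k_2$ and invoking the trivial intersection to get an orthogonal direct sum, together with the termwise differentiability of the power series of elements of $\HS_2$ at the origin.
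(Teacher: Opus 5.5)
Your proposal is correct and follows essentially the same route as the paper. Both arguments use \cite[Corollary 5.8]{paulsen2016introduction} to get $\HS_1\perp\HS_2$ in the RKHS with kernel $k_1+k_2$, identify $\nabla^\top g|_0\,x$ and $g-\nabla^\top g|_0\,x$ with the orthogonal components of $g$ because $k_2$ has no degree-one terms, and recognize $\mathcal{I}_{Q,k_2}|_{\HS_2}$ as the orthogonal projection onto $\HS_{2,m}$. The only differences are that you prove this last fact directly from the reproducing property and verify $\ker\Pi\perp\mathrm{range}\,\Pi$ explicitly, whereas the paper cites \cite[Proposition 4.2]{paulsen2016introduction} and leaves that final check implicit.
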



\begin{proof}
Consider $g\in\HS_1\oplus \HS_2$. Then, according to \cite[Corollary 5.8]{paulsen2016introduction}, $\HS_1\oplus \HS_2$ is a RKHS with reproducing kernel $k = k_1+k_2$, which is equipped with an inner product such that $\HS_1\perp \HS_2$. Moreover, the structure of $k_2$ implies that $\HS_2 = \mathrm{span}\{x^\alpha\}_{|\alpha|\neq 1}$ so that any function in $\HS_1\oplus\HS_2$ is analytic. Hence, we have 
$P_{\HS_1} g = \nabla^\top g \mid_0x \in\HS_1$ and $P_{\HS_2} g = g-\nabla^\top g \mid_0x \in \HS_2$, where $P_{\HS_1}$ and $ P_{\HS_2}$ are the orthogonal projections onto $\HS_1$ and $\HS_{2}$, respectively. Finally, it follows that
$$\Pi g = P_{\HS_1}g + \mathcal{I}_{Q,k_2} P_{\HS_{2}} g = (P_{\HS_1}+ P_{\HS_{2,m}} P_{\HS_{2}}) g$$ 
since $\mathcal{I}_{Q,k_2}|_{\HS_2}=P_{\HS_{2,m}}$, where $P_{\HS_{2,m}}:\HS_2 \to \HS_{2,m}$ is an orthogonal projection (see e.g. \cite[Proposition 4.2]{paulsen2016introduction}).
\end{proof}


\noindent An example of RKHS that satisfies the assumptions of Proposition \ref{prop:proj_ortho} is the Fock space (see Example \ref{example1}).
\noindent The set of assumptions in Proposition \ref{prop:proj_ortho} is the key to the design of our method.  Assume that such assumptions are satisfied. Since $F$ is an analytic vector field admitting a hyperbolic origin, we can easily show that the matrix representation of $\mathcal{L}_\ell$ has the following structure
$$
\mathbf{L}_\ell =
\left[
\begin{array}{cc}
\mathbf{J}_F(0)^\top & 0\\[0.2cm]
\mathbf{G}^{-1}\mathbf{N} & \mathbf{G}^{-1}\mathbf{A}\\
\end{array}
\right],
$$
\noindent where $\mathbf{N}_{ij} = N_j(q_i)$, $\mathbf{A}_{ij} = (\mathcal{L}k_2(q_i,\cdot))(q_j)$ and \mbox{$N(x) = F(x)-\mathbf{J}_F(0)x$} is the nonlinear part of $F$. This structure is block-triangular so that the approximation allows to recover exactly the eigenvalues of $\mathbf{J}_F(0)$. Moreover, the eigenvectors can be computed recursively. Indeed, let $v_\lambda = [v_{\lambda,1},v_{\lambda,2}]^\top$ be the right eigenvector of $\mathbf{L}_{\ell}$ associated with the eigenvalue $\lambda$. We have, $v_{\lambda,1} = w_\lambda$ where $w_\lambda$ is the left eigenvector of $\mathbf{J}_F(0)$ and $v_{\lambda,2}$ is the solution to the linear system $(\mathbf{G}^{-1}\mathbf{A}-\lambda I)v_{\lambda,2} = -\mathbf{G}^{-1}\bold{N}w_\lambda$ or equivalently $(\mathbf{A}-\lambda \mathbf{G})v_{\lambda,2} = -\bold{N}w_\lambda$. If $(\mathbf{A}-\lambda \mathbf{G})$ is singular, we can still compute the least squares solution $v_{\lambda,2} = -(\mathbf{A}-\lambda \mathbf{G})^+\bold{N}w_\lambda$. Note also that the recursive computation of the eigenvectors does not involve the direct computation of $\mathbf{G}^{-1}$, which may be ill-conditioned and could lead to numerical errors. An approximation of the eigenfunction $\phi_{\lambda}$ is given by 
\begin{equation}\label{approx_eig}
\widetilde{\phi}_{\lambda}(x) = w_\lambda^\top x + \sum_{i=1}^m\left(v_{\lambda,2}\right)_ik_2(q_i,x) := w_\lambda^\top x + \tilde{h}_\lambda(x)
\end{equation} 
\noindent for all $x\in \mathcal{R}$. 
\noindent We can show that the approximation obtained in \eqref{approx_eig} verifies $\mathcal{L}\widetilde{\phi}_{\lambda} = \lambda\widetilde{\phi}_{\lambda}$ over the collocation points in $Q$.

\begin{prop}\label{prop-constraints}
Let $\widetilde{\phi}_{\lambda}$ be the approximation \eqref{approx_eig}. Then, $\mathcal{L}\widetilde{\phi}_{\lambda}(q_i) = \lambda\widetilde{\phi}_{\lambda}(q_i)$ for $i=1,\ldots,m$. 
\end{prop}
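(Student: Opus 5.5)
The plan is to use the fact that $\widetilde{\phi}_\lambda = \Psi v_\lambda$ is an eigenfunction of $\mathcal{L}_\ell = \Pi\mathcal{L}|_S$, i.e. $\Pi\mathcal{L}\widetilde{\phi}_\lambda = \lambda\widetilde{\phi}_\lambda$. We then evaluate both sides at the collocation points and show that $\Pi$ does not change values on $Q$. Concretely, for any differentiable $g$ and any $q_i\in Q$, we check that $(\Pi g)(q_i) = g(q_i)$. Indeed, by \eqref{new_proj} and the interpolation property of Proposition \ref{interp} (which applies because $\mathbf{G}$ is invertible by Assumption \ref{assmpt:1}),
$$(\Pi g)(q_i) = \nabla^\top g|_0\, q_i + \bigl(g(q_i) - \nabla^\top g|_0\, q_i\bigr) = g(q_i).$$

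Next we justify that $\widetilde{\phi}_\lambda$ really is an eigenfunction of $\mathcal{L}_\ell$ with eigenvalue $\lambda$. By construction, $v_\lambda=[w_\lambda, v_{\lambda,2}]^\top$ satisfies $\mathbf{L}_\ell v_\lambda = \lambda v_\lambda$. The first block row holds because $w_\lambda$ is a left eigenvector of $\mathbf{J}_F(0)$, so $\mathbf{J}_F(0)^\top w_\lambda=\lambda w_\lambda$. The second block row holds because $v_{\lambda,2}$ solves $(\mathbf{A}-\lambda\mathbf{G})v_{\lambda,2} = -\mathbf{N}w_\lambda$, which after multiplication by $\mathbf{G}^{-1}$ is exactly that row. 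Since $\mathbf{L}_\ell$ represents $\mathcal{L}_\ell$ in the basis $\{x_1,\dots,x_n,k_2(q_1,\cdot),\dots,k_2(q_m,\cdot)\}$, the function $\widetilde{\phi}_\lambda$ in \eqref{approx_eig} satisfies $\Pi\mathcal{L}\widetilde{\phi}_\lambda=\lambda\widetilde{\phi}_\lambda$.

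Combining the two facts with $g=\mathcal{L}\widetilde{\phi}_\lambda$, which is differentiable since $F$ and the kernels are analytic, gives $\mathcal{L}\widetilde{\phi}_\lambda(q_i) = (\Pi\mathcal{L}\widetilde{\phi}_\lambda)(q_i) = \lambda\widetilde{\phi}_\lambda(q_i)$.

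The main obstacle is the case in which $\mathbf{A}-\lambda\mathbf{G}$ is singular and only a least-squares solution is available. In that case the second block row holds only if $\mathbf{N}w_\lambda$ lies in the range of $\mathbf{A}-\lambda\mathbf{G}$. The proof should therefore assume an exact solution exists, which is the generic case.

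An alternative, more explicit route is to compute directly. Write $\mathcal{L}\widetilde{\phi}_\lambda(x) = w_\lambda^\top \mathbf{J}_F(0)x + w_\lambda^\top N(x) + \sum_j (v_{\lambda,2})_j (\mathcal{L}k_2(q_j,\cdot))(x)$ and evaluate at $q_i$. The result is $\lambda w_\lambda^\top q_i + (\mathbf{N}w_\lambda)_i + (\mathbf{A}v_{\lambda,2})_i$, taking care with the index convention of $\mathbf{A}$ (it may be its transpose, consistent with the paper's matrix structure). On the other side, $\lambda\widetilde{\phi}_\lambda(q_i)=\lambda w_\lambda^\top q_i+\lambda(\mathbf{G}v_{\lambda,2})_i$. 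The two expressions agree precisely by the linear system $(\mathbf{A}-\lambda\mathbf{G})v_{\lambda,2}=-\mathbf{N}w_\lambda$.
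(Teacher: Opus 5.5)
This is correct and follows the paper's proof: $\Pi\mathcal{L}\widetilde{\phi}_\lambda=\lambda\widetilde{\phi}_\lambda$ holds by construction of the eigenvector, and $(\Pi g)(q_i)=g(q_i)$ because the interpolant reproduces $g-\nabla^\top g|_0x$ on $Q$. Your added remarks and your direct check are also sound, and they make explicit what the paper's ``by construction'' leaves implicit: $v_{\lambda,2}$ must solve $(\mathbf{A}-\lambda\mathbf{G})v_{\lambda,2}=-\mathbf{N}w_\lambda$ exactly, not just in the least-squares sense, and for the stated block form of $\mathbf{L}_\ell$ the matrix $\mathbf{A}$ must have entries $(\mathcal{L}k_2(q_j,\cdot))(q_i)$, which is the transpose of the paper's stated convention.
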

\begin{proof}
By construction, we have $\Pi \mathcal{L}\widetilde{\phi}_{\lambda} = \lambda\widetilde{\phi}_{\lambda}$. Moreover, it holds that $(\Pi\mathcal{L}\widetilde{\phi}_{\lambda})(q_i) = (\mathcal{L}\widetilde{\phi}_{\lambda})(q_i)$ since any interpolation coincides with the function itself at collocation points, which conclude the proof. 
\end{proof}
\begin{remark}
The method developed in this paper requires that $\mathbf{G}$ is invertible so that the interpolation $\mathcal{I}_{Q,k_2}$ is well-defined. If it is no longer the case, we can still define the projection

\[
\begin{split}
\Pi g & = P_{\HS_1} g + P_{\HS_{2,m}} P_{\HS_2} g \\[0.2cm]
& = \nabla^\top  g\mid_0x + \underset{h\in\HS_{2,m}} {\mathrm{argmin}}\left\|(g-\nabla^\top  g\mid_0x)-h\right\|_{\HS_2}
\end{split}
\]
for any $g\in\HS_1 \oplus \HS_2$. In this case, the matrix $\mathbf{L}_\ell$ still has a block-triangular structure, but the obtained approximated eigenfunctions do not satisfy the PDE at the collocation points (Proposition \ref{prop-constraints}). 
\end{remark}


\subsection{Connection with existing methods}

\noindent Our method bears similarity with the recent RKHS-based method developed in \cite{lee2025kernel}. In this work, the PDE $\mathcal{L}\phi_{\lambda} = \lambda\phi_{\lambda}$ is solved for any $\lambda\in\sigma(\mathbf{J}_F(0))$. More precisely, for any $\lambda\in\sigma(\mathbf{J}_F(0))$ we have 
$
\phi_\lambda(x) = w_\lambda^\top x + h_\lambda(x)
$
where $h(x)$ is the nonlinear part of the eigenfunction that verifies $\mathcal{L}h_\lambda(x) = \lambda h_\lambda(x) - w_\lambda^\top N(x)$, see e.g. \cite{deka2023path}. In \cite{lee2025kernel}, the authors compute $h$ by solving the optimization problem 
\begin{equation*}
\begin{array}{cl}
\displaystyle \min_{h\in\mathcal{H}} &\|h\|_{\mathcal{H}}\\
~~\textrm{s.t.} & \mathcal{L}h(q_i) = \lambda h(q_i) - w_\lambda^\top N(q_i),~\forall i = 1,\ldots,m,\\
\end{array}
\end{equation*}
which admits a closed form solution, which also satisfies by construction the property of Proposition \ref{prop-constraints}. In our method, $\tilde{h}_\lambda(x)$ in \eqref{approx_eig} is the solution to the  optimization problem 
\begin{equation}\label{sol_as_opt}
\begin{array}{cl}
\displaystyle \min_{h\in\mathcal{H}_{2,m}} &\|h\|_{\mathcal{H}_{2,m}}\\
~~\textrm{s.t.} & \mathcal{L}h(q_i) = \lambda h(q_i) - w_\lambda^\top N(q_i),~\forall i = 1,\ldots,m,\\
\end{array}
\end{equation}
provided that $(\mathbf{A}-\lambda \mathbf{G})$ is invertible. However, the two methods are different since $\mathcal{H}_{2,m}$ is finite-dimensional, and the solution to \eqref{sol_as_opt} cannot be obtained through the method developed in \cite{lee2025kernel}.


\section{Validation of the Lyapunov candidate through scenario-based optimization}\label{sec:validation}

Next, we will rely on a scenario-based optimization method to validate the candidate Lyapunov function obtained from the approximate eigenfunctions. In the next section, we briefly summarize the main results on nonconvex scenario-based optimization which are developed in \cite{campi2015non}.

\subsection{Scenario-based optimization}\label{sec:SP}

Let $(\Delta,\mathcal{A},\mathbb{P})$ be a probability space with probability measure $\mathbb{P}$ and consider the optimization problem
\begin{equation}
\label{original_opt}
\begin{array}{cl}
\displaystyle \max_{\theta\in\mathbb{R}^d} &f(\theta)\\
~~\textrm{s.t.} & \theta\in\Theta_{\delta},~\forall \delta\in\Delta\\
\end{array}
\end{equation}
where $f:\X\rightarrow\mathbb{R}$ is a function and $\Theta_\delta$ is a subset of $\mathbb{R}^d$ for any $\delta\in\Delta$. Note that we do not make any assumption on the problem convexity. Solving \eqref{original_opt} for any $\delta\in\Delta$ is usually challenging. The key idea is to tackle the  relaxed scenario-based problem
\begin{equation}
    \label{NCSP}
\begin{array}{cl}
\displaystyle \max_{\theta\in\mathbb{R}^d} &f(\theta)\\
~~\textrm{s.t.} & \theta\in\Theta_{\delta^{(i)}},~\forall i=1,\ldots,N,\\
\end{array}
\end{equation}
for some $N\in\mathbb{N}$, where $(\delta^{(1)},\ldots,\delta^{(N)})\in\Delta^N$ is a random variable associated with the probability measure $\mathbb{P}^N$.
The solution $\theta^*_N$ to \eqref{NCSP} is a random variable that is assumed to be unique for a fixed sample $(\delta^{(1)},\ldots,\delta^{(N)})$. Since $\theta^*_N$  depends on the sampling of scenarios, it is natural to investigate its validity with other scenarios that have not been considered. This leads to the notion of \textit{probability of violation}. 

\begin{defn}[\hspace{-0.0001cm}\cite{campi2015non}]\label{defn:viola}
The probability of violation of $\theta\in\X$ is defined as 
$
W(\theta) := \mathbb{P}\{\delta\in\Delta \mid \theta\not\in\Theta_\delta\}.
$
\end{defn}
\noindent We also recall the definition of \textit{support set}.
\begin{defn}[\hspace{-0.0001cm}\cite{campi2015non}]
A support set for \eqref{NCSP} is a subset \mbox{$I = \{\delta^{(i_1)},\ldots,\delta^{(i_k)}\} \subseteq\{\delta^{(1)},\ldots,\delta^{(N)}\}$} such that the solution to \eqref{NCSP} remains a solution for the program restricted to the constraints $\theta\in\Theta_{\delta^{(i_1)}},\ldots,\theta\in\Theta_{\delta^{(i_k)}}$.
\end{defn}

\noindent We are now in a position to state the main result providing probability guarantees that a solution to \eqref{NCSP} is a valid solution to \eqref{original_opt}.

\begin{prop}[\hspace{-0.0001cm}\cite{campi2015non}]\label{thm:SP}
Let $\beta\in (0,1)$ and $\varepsilon : \{0,\ldots,N\}\rightarrow 1$ defined by  
\begin{equation}\label{epsilon}
\varepsilon(k) := 
\left\{
\begin{array}{ll}
1&  \text{if }~ k = N,\\
1-\sqrt[N-k]{\frac{\beta}{N{N \choose k}}}&\text{otherwise}
\end{array}
\right.
\end{equation}
and assume that $I$ is a support set of \eqref{NCSP}. Then 
$$
\mathbb{P}^N\{W(\theta_N^*)\leq\varepsilon(|I|)\}>1-\beta.
$$
\end{prop}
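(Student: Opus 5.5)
We need to prove the Campi–Garatti non-convex scenario result (Proposition \ref{thm:SP}). The plan is a union bound over all candidate support subsets combined with the classical scenario bound for a fixed set of constraints.

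First, we fix $k\in\{0,\ldots,N-1\}$; the case $k=N$ is trivial since $\varepsilon(N)=1$ and $W\le 1$ always. For each index set $J\subseteq\{1,\ldots,N\}$ with $|J|=k$, we let $\theta_J$ denote the solution of the program restricted to the constraints indexed by $J$. We assume uniqueness, as in the paper, or a fixed tie-breaking rule so that $\theta_J$ is a measurable function of $(\delta^{(i)})_{i\in J}$ only. The key event inclusion is then
\[
\{W(\theta_N^*)>\varepsilon(k),\ |I|=k\}\subseteq\bigcup_{|J|=k}\big\{W(\theta_J)>\varepsilon(k),\ \theta_J\in\Theta_{\delta^{(i)}}\ \forall i\notin J\big\}.
\]
This holds because if $I$ (indexed by $J$) is a support set, then $\theta_N^*=\theta_J$ by definition, and $\theta_N^*$ satisfies all $N$ constraints.

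Second, we bound each event in the union. Conditioning on $(\delta^{(i)})_{i\in J}$ fixes $\theta_J$. The remaining $N-k$ samples are independent of it, and each satisfies the constraint with probability $1-W(\theta_J)<1-\varepsilon(k)$ on the event $W(\theta_J)>\varepsilon(k)$. Integrating over the conditioning variables gives probability at most $(1-\varepsilon(k))^{N-k}$. The union bound over the $\binom{N}{k}$ choices of $J$ then yields
\[
\mathbb{P}^N\{W(\theta_N^*)>\varepsilon(k),|I|=k\}\le\binom{N}{k}(1-\varepsilon(k))^{N-k}=\frac{\beta}{N},
\]
using the definition \eqref{epsilon}.

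Third, we sum over $k=0,\ldots,N-1$, which gives $\mathbb{P}^N\{W(\theta_N^*)>\varepsilon(|I|)\}\le\beta$. The strict inequality in the statement follows from a mild refinement. The main obstacle is the $|I|$-dependence: $I$ is itself random, chosen after seeing the data. One must therefore fix a deterministic rule selecting the support set, for instance a minimal one with lexicographic tie-breaking, so that $\{|I|=k\}$ is measurable and the inclusion above holds. Measurability of $\theta_J$ and of $W(\theta_J)$ must be assumed as well, as in \cite{campi2015non}.
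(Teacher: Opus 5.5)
Your proof is correct and is essentially the standard argument of \cite{campi2015non}; the paper itself gives no proof and only cites the result. The argument is a union bound over index sets $J$ with $|J|=k$, a factor $(1-\varepsilon(k))^{N-k}$ from the independence of the $N-k$ remaining scenarios, and a sum of $N$ terms each equal to $\beta/N$. The strict inequality holds because each term $\mathbb{E}\big[\mathbf{1}\{W(\theta_J)>\varepsilon(k)\}(1-W(\theta_J))^{N-k}\big]$ is strictly smaller than $\beta/(N\binom{N}{k})$. Under the paper's ``remains a solution'' definition of a support set, the identification $\theta_N^*=\theta_J$ needs the restricted solutions to be unique; an arbitrary tie-breaking rule is not enough unless support sets are defined through that same decision map.
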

 


\subsection{Validation of the Lyapunov function}

Let $\overline{V}$ be the candidate Lyapunov function \eqref{TrueLyap} computed with the approximated eigenfunctions \eqref{approx_eig}. Consider the sets $\mathcal{M} = \{x\in\X \mid \dot{\overline{V}}(x)<0\}$ and $\Omega_{\theta_2} \setminus \Omega_{\theta_1} = \{x\in\X \mid \theta_1<\overline{V}(x)<\theta_2\}$ for any $\theta_1,\theta_2\geq0$. In order to validate the candidate $\overline{V}$ and compute an approximation of the region of attraction, we aim at solving the optimization problem 

\begin{equation}\label{ROA_opti}
\begin{array}{cl}
\displaystyle \max_{\theta_1,\theta_2\geq 0} & \mathrm{Vol}\left(\Omega_{\theta_2} \setminus \Omega_{\theta_1}\right)\\
~~\textrm{s.t.} & \overline{\Omega_{\theta_2} \setminus \Omega_{\theta_1}} \subseteq \mathcal{M}.\\
\end{array}
\end{equation}
where $\mathrm{Vol}\left(\Omega_{\theta_2} \setminus \Omega_{\theta_1}\right) = \int_{\Omega_{\theta_2} \setminus \Omega_{\theta_1}} \mathrm{d}\mu$ refers to the volume of the set $\Omega_{\theta_2} \setminus \Omega_{\theta_1}$ and $\mu$ is the Lebesgue measure. This problem can be rewritten in the form \eqref{original_opt}
with $\theta = (\theta_1,\theta_2)\in\mathbb{R}_+^2$, $f(\theta) = \mathrm{Vol}\left(\Omega_{\theta_2} \setminus \Omega_{\theta_1}\right)$, $\Delta=\mathbb{X}$, and $\Theta_\delta = \{\theta\in\mathbb{R}_+^2 \mid \overline{V}(\delta)\leq\theta_1 \vee \overline{V}(\delta)\geq\theta_2 \vee \dot{\overline{V}}(\delta)<0\}$. 
Since it is hard to solve, we will rely on the scenario-based relaxation \eqref{NCSP} where $\Delta =\X$ is endowed with the uniform probability measure on $\X$. For the sake of simplicity and when clear from the context, we will omit the dependence on $N$ and we will denote by $\theta^* = (\theta_1^*,\theta_2^*)$ the solution to the scenario-based problem. It is clear that $\theta = (\theta_1,\theta_2)$ is a feasible solution to the obtained scenario-based problem if $\overline{V}(\delta^{(i)}) \notin [\theta_1,\theta_2]$ for all $i \in\{1,\dots,N\}$ such that $\dot{\overline{V}}(\delta^{(i)})\geq 0$ (see Figure \ref{fig:ex_alg}). Hence, an optimal solution is obtained by sorting in ascending order the values $\overline{V}(\delta^{(i)})$ for all $i \in S = \{i \, | \, \dot{\overline{V}}(\delta^{(i)})\geq0\}$, i.e. finding a map $\tau: S \to S$ such that $\overline{V}(\delta^{(\tau(i))}) \leq \overline{V}(\delta^{(\tau(i+1))})$. We have in this case the optimal solution
$$\theta^*=(\overline{V}(\delta^{(\tau(i^*))})+\xi,\overline{V}(\delta^{(\tau(i^*+1))})-\xi)$$
with
\[
i^* = \arg \max_{i \in S} \mathrm{Vol}\left(\Omega_{\overline{V}(\delta^{(\tau(i+1))})} \setminus \Omega_{\overline{V}(\delta^{(\tau(i))})}\right)
\]
and where $\xi>0$ is an arbitrarily small value which ensures that both values $\overline{V}(\delta^{(\tau(i^*))})$ and $\overline{V}(\delta^{(\tau(i^*+1))})$ do not lie in $[\theta_1^*,\theta_2^*]$. This solution is summarized in Algorithm \ref{alg:dichotomy}. For practical implementation, we shall assume that the set of scenarios is large enough and well distributed so that any volume $\mathrm{Vol}\left(\Omega_{\theta_2}\setminus\Omega_{\theta_1}\right)$ can be approximated with the number of values $\overline{V}(\delta^{(i)})$ lying in $[\theta_1,\theta_2]$. 

\begin{figure}[t]
\centering
\includegraphics[width=\linewidth]{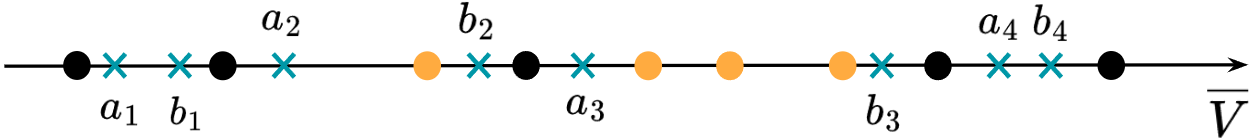}
\caption{Black dots (respectively yellow dots) correspond to scenarios $\delta^{(i)}$ such that $\dot{\overline{V}}\left(\delta^{(i)}\right)\geq0$ (respectively $\dot{\overline{V}}\left(\delta^{(i)}\right)<0$). Any couple $(a_i,b_i)\in\R_2^+$ is feasible solution to the scenario-based relaxation of \eqref{ROA_opti}}
\label{fig:ex_alg}
\end{figure}

\begin{algorithm}[b]
\begin{algorithmic}[1]
\REQUIRE $\{\delta^{(i)}\}_{i=1}^N$ and $\xi>0$ arbitrarily small.  
\ENSURE $(\theta_1^*,\theta_2^*)$ solution of the scenario relaxation of \eqref{ROA_opti}
\STATE Identify the set $S$ and compute the sorting map $\tau$
\STATE $l = 0$
\FOR{$i = 1,\ldots,N-1$}
\STATE $s = \mathrm{Vol}\left(\Omega_{\overline{V}(\delta^{(\tau(i+1))})} \setminus \Omega_{\overline{V}(\delta^{(\tau(i))})}\right)$
\IF{$ s\geq l$}
\STATE $\theta_1^* = \overline{V}(\delta^{(\tau(i))})+\xi$
\STATE $\theta_2^* = \overline{V}(\delta^{(\tau(i+1))})-\xi$
\STATE $l = s$
\ENDIF
\ENDFOR
\end{algorithmic}
\caption{Solution of the scenario relaxation of \eqref{ROA_opti}}
\label{alg:dichotomy}
\end{algorithm}


Finally, it is easy to see that a support set is given by $I=\{\delta^{(\tau(i^*))},\delta^{(\tau(i^*+1))}\}$ so that Theorem \ref{thm:SP} can be applied with
$$\varepsilon(2)=1-\sqrt[N-2]{\frac{2\beta}{N^2(N-1)}}.$$
Note that this result is independent of the dimension of the dynamical system. Moreover, the dimension only affects the computation of $\overline{V}$ and $\dot{\overline{V}}$, which scales linearly with the dimension. If we set $N = 10^{4}$ (resp. $N = 5\times10^5$) and $\beta=10^{-6}$, then the fraction of points $x\in \Omega_{\theta_2^*}\setminus\Omega_{\theta_1^*}$ that violate $\dot{\overline{V}}(x)<0$ is smaller than $\varepsilon(2) = 4.1\times 10^{-3}$ (resp. $\varepsilon(2) = 1.05 \times 10^{-4}$), with probability larger than $1-\beta$. In this case, we can conclude that $\Omega_{\theta_2^*}\setminus\Omega_{\theta_1^*}$ is a valid approximation of the region of attraction, since the probability of violation is negligible (see section \ref{sec:examples}).

\section{Numerical examples}\label{sec:examples}

In this section, we use our method to compute and validate a Lyapunov function, and obtain an approximation of the region of attraction (ROA) for several dynamical systems with a stable equilibrium. For all examples, we use the kernel $k_2$ given in Example \ref{example1}, with $\eta=1$. We solve the scenario-based problem \eqref{NCSP} with $N = 10^{4}$ in Example 1 and 2 and $N = 5\times10^{5}$ in Example 3. 


\paragraph{Example 1 (Van der Pol system)} Consider the reversed Van der Pol oscillator 
\begin{equation}\label{ex1}
\left\{
\begin{array}{rcl}
\dot x_1 & = & -x_2,  \\[0.1cm]
\dot x_2 & = & -\mu(1-9x_1^2)x_2+x_1,
\end{array}
\right.
\end{equation}
where $\mu = 1$ and $x\in \X = [-1,1]^2$. This system admits a locally stable equilibrium at the origin. An approximation of the Lyapunov function is computed via the method presented in section \ref{sec:method}, with $m=100$ collocation points $\{q_i\}_{i=1}^m$ uniformly randomly distributed over $[-0.15,0.15]^2$. As shown in Figure \ref{fig:example1}, this provides a good approximation of the ROA.



\paragraph{Example 2 (Two-machine power system)} Consider the two machine power system  \cite{vannelli1985maximal} 
\begin{equation}\label{ex2}
\left\{
\begin{array}{rcl}
\dot x_1 & = & x_2,  \\[0.1cm]
\dot x_2 & = & -\dfrac1 2 x_2 - \sin\left(3x_1+\dfrac\pi 3\right)+\dfrac{\sqrt{3}}{6},
\end{array}
\right.
\end{equation}
where $x\in\X = [-1,1]^2$, which admits a locally stable equilibrium at the origin. The Lyapunov function is computed with $m=100$ collocation points uniformly randomly distributed over $[-0.08,0.08]^2$. The approximation of the ROA is shown in Figure \ref{fig:example1}. As mentioned in Remark \ref{rem:samplingQ}, sampling the collocation points in the entire ROA and not in the largest hypercube of $\mathcal{R}$ yields a larger approximation.

\begin{figure}[h]
\centering
\subfigure[Van der Pol system]{  
\includegraphics[width=0.23\textwidth]{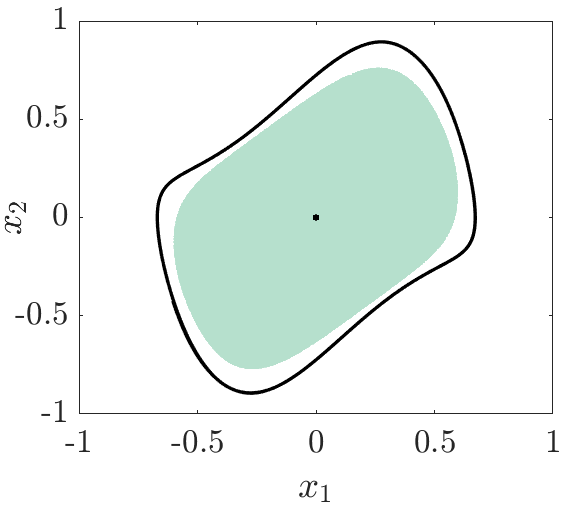}
}\hspace{-0.2cm}
\subfigure[Two-machine power system]{
\includegraphics[width=0.23\textwidth]{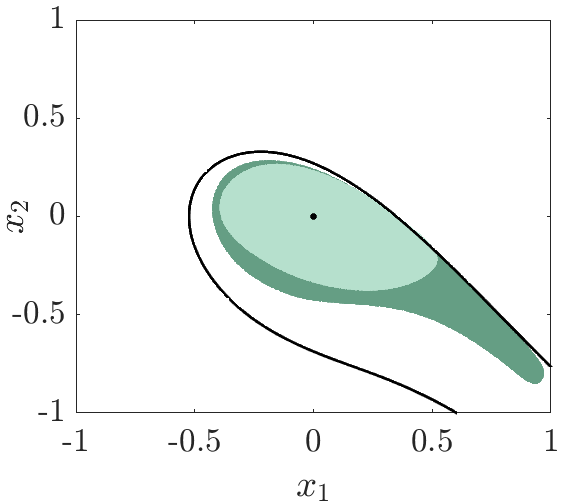}}
\caption{An approximation of the ROA for systems \eqref{ex1} and \eqref{ex2} is computed (light green area) with collocation points sampled over $[-0.15,0.15]^2$ and $[-0.08,0.08]^2$ respectively. The exact ROA is delimited by the black curve. In (b), a better approximation of the ROA is obtained when the collocation points are sampled over the hypercube $\mathcal{R})$ (dark green area).}
\label{fig:example1}
\end{figure}


\paragraph{Example 3 (Networked Van der Pol system)} We consider a network of $5$ coupled Van der Pol systems inspired by \cite{liu2025physics}. The $10$-dimensional dynamics is given by 
\begin{equation}\label{ex:VDP10}
\left\{
\begin{array}{rcl}
\dot x_{i1} &=& -x_{i2},\\[0.2cm]
\dot x_{i2} &=&x_{i1} - \alpha_i(1-x_{i1}^2)x_{i2}+\sum_{j\neq i}\beta_{ij}x_{i1}x_{j2},
\end{array}
\right.
\end{equation}
with $i=\{1,\dots,5\}$, $\X = [-4,4]^{10}$. The parameters $\alpha_i$ and $\beta_{ij}$ are randomly sampled over $[0.5,2.5]$ and $[-0.1,0.1]$, respectively. However, some parameters $\beta_{ij}$ are set to zero so that the number of nonzero entries for each $i$ is equal to $2$.
For the sake of simplicity, the system has been rescaled over $[-1,1]^{10}$. We computed a candidate Lyapunov function with $m=500$ collocation points uniformly distributed over $[-0.15,0.15]^{10}$. It is clear that the restriction of \eqref{ex:VDP10} to the $(x_{i1},x_{i2})$ cross-sections (for any $i=1,\ldots,5$) is a Van der Pol system, so that the intersection of the ROA with these cross-sections is easily computed (see the black curves in Figure \ref{fig:example3}). The approximation of the ROA is shown in Figure \ref{fig:example3}. We observe that a small region of the estimated set lies outside the true ROA. Its volume is $1.3\times10^{-6}$ while the volume of the violation set is $1.7\times10^{-6}$ (Monte-Carlo estimations). We note that the two volumes have the same order of magnitude and are bounded by $\varepsilon(2)$. We finally remark that, in some cases, the optimal solution might not be close to the origin. In such a situation, a sub-optimal solution could be preferred and the results presented in Section \ref{sec:SP} would still hold (see \cite{campi2015non}).

\begin{figure}[h]
\centering

\includegraphics[width=0.48\textwidth]{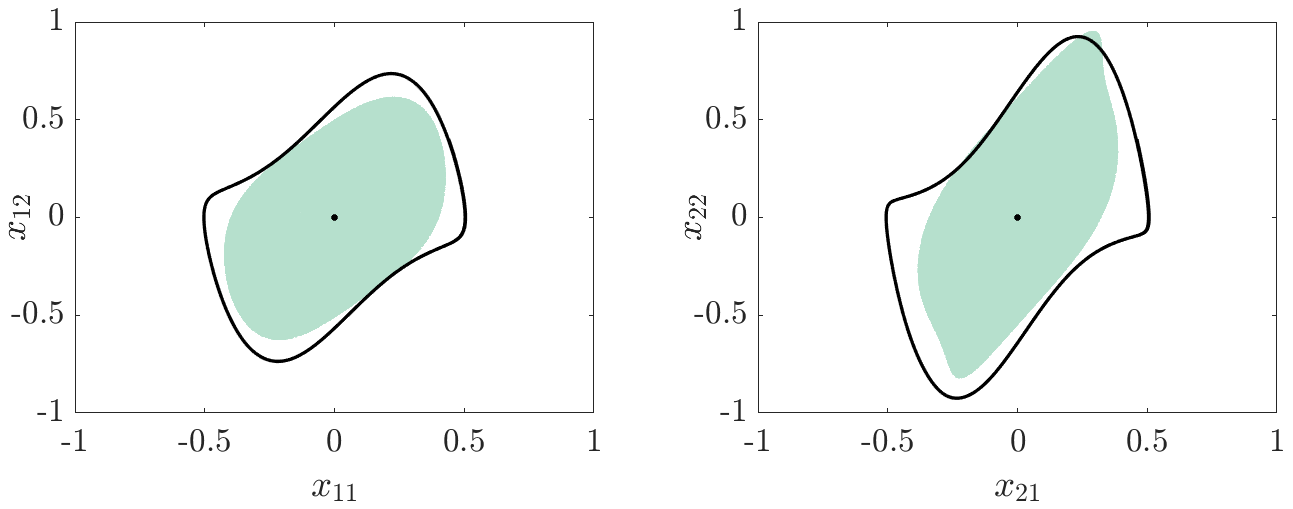}
\caption{ROA approximation of the networked Van der Pol system \eqref{ex:VDP10}. The green area is the approximation of the ROA in the $(x_{11},x_{12})$ cross-section (left) and $(x_{21},x_{22})$ cross-section (right). The black curve is the true ROA.}
\label{fig:example3}
\end{figure}

\section{Conclusions and Perspectives}\label{sec:ccl}

The Koopman operator provides a systematic method for Lyapunov function design through its eigenfunctions. In this paper, we developed a kernel-based method to compute the Koopman eigenfunctions via finite-dimensional approximation over an analytic RKHS. The analyticity property was leveraged to derive an approximation of the operator that preserves the spectrum of the linearized system while being well-suited to high-dimensional systems. Finally, we derived a scenario-based optimization technique to validate the obtained candidate Lyapunov function with probabilistic guarantees and to compute an estimation of regions of attraction. We applied our method to several systems and obtained good results for both low-dimensional and high-dimensional systems.

This work opens up several perspectives, such as optimizing over hyperparameters and collocations points, combining several Lyapunov functions, or using other validation schemes (e.g. SMT solvers \cite{gao2013dreal}). In a different setting, the scenario problem could also be recast to exploit the convergence of simulated trajectories towards the equilibrium. Finally, similarly to input-to-state Lyapunov functions, we could consider the error of approximation of the so-called lifted dynamics as an input of that system and derive rigorous stability guarantees in the same spirit as in \cite[Proposition 2.4]{mauroy2014global}.\\  

\bibliographystyle{IEEEtran}
\bibliography{ref}

@Article{mauroy2014global,
  author  = {Mauroy, A. and Mezi\'c, I.},
  title   = {Global stability analysis using the eigenfunctions of the {K}oopman operator},
  number  = {11},
  pages   = {3356--3369},
  volume  = {61},
  journal = {IEEE Tran Autom Control},
  year    = {2016},
}

@Book{mauroy2020koopman,
  author    = {Mauroy, A. and Susuki, Y. and Mezi{\'c}, I.},
  title     = {The Koopman operator in systems and control},
  publisher = {Springer},
  year      = {2020},
}

@book{paulsen2016introduction,
  title={An introduction to the theory of reproducing kernel Hilbert spaces},
  author={Paulsen, Vern I and Raghupathi, Mrinal},
  volume={152},
  year={2016},
  publisher={Cambridge university press}
}

@book{shawe2004kernel,
  title={Kernel methods for pattern analysis},
  author={Shawe-Taylor, John and Cristianini, Nello},
  year={2004},
  publisher={Cambridge university press}
}

@inproceedings{deka2023path,
  title={{Path-integral formula for computing Koopman eigenfunctions}},
  author={Deka, Shankar A and Narayanan, Sriram SKS and Vaidya, Umesh},
  booktitle={2023 62nd IEEE Conference on Decision and Control (CDC)},
  pages={6641--6646},
  year={2023},
  organization={IEEE}
}

@article{lee2025kernel,
  title={{Kernel Methods for the Approximation of the Eigenfunctions of the Koopman Operator}},
  author={Lee, Jonghyeon and Hamzi, Boumediene and Hou, Boya and Owhadi, Houman and Santin, Gabriele and Vaidya, Umesh},
  journal={Physica D: Nonlinear Phenomena},
  pages={134662},
  year={2025},
  publisher={Elsevier}
}

@Book{KhalilNonlinearControl,
  Title = {Nonlinear systems},
  Author = {Khalil, H.K.},
  Publisher = {Prentice Hall},
  Year = {2002},
  ISBN = {0-13-067389-7}
}

@inproceedings{bierwarterror,
  title={{Error bounds on analytic Koopman-based Lyapunov functions}},
  author={Bierwart, Fran{\c{c}}ois-Gr{\'e}goire and Mauroy, Alexandre},
  booktitle={2025 European Control Conference (ECC)},
  pages={1243--1248},
  year={2025},
  organization={IEEE}
}

@inproceedings{campi2015non,
  title={Non-convex scenario optimization with application to system identification},
  author={Campi, Marco C and Garatti, Simone and Ramponi, Federico A},
  booktitle={2015 54th IEEE Conference on Decision and Control (CDC)},
  pages={4023--4028},
  year={2015},
  organization={IEEE}
}

@article{pinkus2004strictly,
  title={Strictly positive definite functions on a real inner product space},
  author={Pinkus, Allan},
  journal={Advances in Computational Mathematics},
  volume={20},
  number={4},
  pages={263--271},
  year={2004},
  publisher={Springer}
}

@article{giesl2015review,
  title={{Review on computational methods for Lyapunov functions}},
  author={Giesl, Peter and Hafstein, Sigurdur},
  journal={Discrete and Continuous Dynamical Systems-B},
  volume={20},
  number={8},
  pages={2291--2331},
  year={2015},
  publisher={Discrete and Continuous Dynamical Systems-B}
}

@article{vannelli1985maximal,
  title={{Maximal Lyapunov functions and domains of attraction for autonomous nonlinear systems}},
  author={Vannelli, Anthony and Vidyasagar, Mathukumalli},
  journal={Automatica},
  volume={21},
  number={1},
  pages={69--80},
  year={1985},
  publisher={Elsevier}
}

@article{liu2025physics,
  title={{Physics-informed neural network Lyapunov functions: PDE characterization, learning, and verification}},
  author={Liu, Jun and Meng, Yiming and Fitzsimmons, Maxwell and Zhou, Ruikun},
  journal={Automatica},
  volume={175},
  pages={112193},
  year={2025},
  publisher={Elsevier}
}

@inproceedings{gao2013dreal,
  title={d{R}eal: An {SMT} solver for nonlinear theories over the reals},
  author={Gao, Sicun and Kong, Soonho and Clarke, Edmund M},
  booktitle={Automated Deduction--CADE-24: 24th International Conference on Automated Deduction, Lake Placid, NY, USA, June 9-14, 2013. Proceedings 24},
  pages={208--214},
  year={2013},
  organization={Springer}
}

@incollection{papachristodoulou2005analysis,
  title={Analysis of non-polynomial systems using the {S}um-{O}f-{S}quares decomposition},
  author={Papachristodoulou, Antonis and Prajna, Stephen},
  booktitle={Positive polynomials in control},
  pages={23--43},
  year={2005},
  publisher={Springer}
}

@article{giesl2007meshless,
  title={Meshless collocation: Error estimates with application to dynamical systems},
  author={Giesl, Peter and Wendland, Holger},
  journal={SIAM Journal on Numerical Analysis},
  volume={45},
  number={4},
  pages={1723--1741},
  year={2007},
  publisher={SIAM}
}

@article{giesl2016approximation,
  title={Approximation of {L}yapunov functions from noisy data},
  author={Giesl, Peter and Hamzi, Boumediene and Rasmussen, Martin and Webster, Kevin},
  journal={Journal of Computational Dynamics},
  volume={7},
  number={1},
  pages={57--81},
  year={2019},
  publisher={Journal of Computational Dynamics}
}

@article{kvalheim2021existence,
  title={{Existence and uniqueness of global Koopman eigenfunctions for stable fixed points and periodic orbits}},
  author={Kvalheim, Matthew D and Revzen, Shai},
  journal={Physica D: Nonlinear Phenomena},
  volume={425},
  pages={132959},
  year={2021},
  publisher={Elsevier}
}

@article{das2020koopman,
  title={Koopman spectra in reproducing kernel Hilbert spaces},
  author={Das, Suddhasattwa and Giannakis, Dimitrios},
  journal={Applied and Computational Harmonic Analysis},
  volume={49},
  number={2},
  pages={573--607},
  year={2020},
  publisher={Elsevier}
}

@article{klus2020kernel,
  title={Kernel-based approximation of the Koopman generator and Schr{\"o}dinger operator},
  author={Klus, Stefan and N{\"u}ske, Feliks and Hamzi, Boumediene},
  journal={Entropy},
  volume={22},
  number={7},
  pages={722},
  year={2020},
  publisher={MDPI}
}

@article{williams2014kernel,
  title={A kernel-based approach to data-driven Koopman spectral analysis},
  author={Williams, Matthew O and Rowley, Clarence W and Kevrekidis, Ioannis G},
  journal={arXiv preprint arXiv:1411.2260},
  year={2014}
}

@article{yadav2025approximation,
  title={Approximation of the Koopman operator via Bernstein polynomials},
  author={Yadav, Rishikesh and Mauroy, Alexandre},
  journal={Communications in Nonlinear Science and Numerical Simulation},
  volume={147},
  pages={108819},
  year={2025},
  publisher={Elsevier}
}


\end{document}